\newtheorem{theorem}{Theorem}[section]
\newtheorem{lemma}[theorem]{Lemma}
\newtheorem{proposition}{Proposition}[section]
\theoremstyle{definition}
\theoremstyle{remark}
\numberwithin{equation}{section}
\newcommand{\R}{\mathbb{R}}
\newcommand{\C}{\mathbb{C}}
\newcommand{\A}{\mathcal{A}}
\newcommand{\T}{\mathcal{T}}
\renewcommand{\L}{\mathcal{L}}
\renewcommand{\Re}{\operatorname{Re}}
\newcommand{\fra}{\mathfrak{a}}
\renewcommand{\mid}{\, \vert \,}
\begin{document}

\title{On evolution equations governed by non-autonomous forms}

\author{EL-Mennaoui Omar}
\address{Department of Mathematics, University Ibn Zohr, Faculty of Sciences, Agadir, Morocco}
\email{elmennaouiomar@yahoo.fr }

\author{Laasri Hafida}
\address{FernUniversität in Hagen – University of Hagen
Faculty of Mathematics and Computer Science,
Analysis, 58084 Hagen}
\email{hafida.laasri@fernuni-hagen.de}

\subjclass[2010]{35K90, 35K50, 35K45, 47D06.}

\keywords{Sesquilinear forms, non-autonomous evolution equations, maximal regularity, approximation.}

\begin{abstract}\label{abstract}
We consider a linear non-autonomous evolutionary Cauchy problem
\begin{equation}\label{eq00}
\dot{u} (t)+\A(t)u(t)=f(t) \hbox{ for }\  \hbox{a.e. t}\in [0,T],\quad u(0)=u_0,
\end{equation}
where the operator  $\A(t)$ arises from a time depending sesquilinear form $\fra(t,.,.)$ on a Hilbert space $H$ with constant domain $V.$
Recently a result on $L^2$-maximal regularity in $H,$ i.e., for each given $f\in L^2(0,T,H)$ and $u_0 \in V$ the problem  (\ref{eq00}) has a unique solution $u\in L^2(0,T,V)\cap H^1(0,T,H),$  is proved in \cite{D1} under the assumption
that $\fra$ is symmetric and of bounded variation.
The aim of this paper is to prove that the solutions of an approximate non-autonomous Cauchy problem in which $\fra$ is symmetric and  piecewise affine are closed to the solutions  of  that governed by symmetric and of bounded variation form. In particular, this provide an alternative proof of the result in \cite{D1} on $L^2$-maximal regularity in $H.$ 
\end{abstract}

\maketitle

\section{Introduction}\label{section:introduction}
In this work we are interested by  evolutionary linear equations of the form
\begin{equation}\label{eq0}
\dot{u} (t)+\A(t)u(t)=f(t), \quad u(0)=u_0,
\end{equation}
where the operators $\A(t), \ t\in[0,T]$ arise from time dependent
sesquilinear forms.  More precisely, let
 $H$ and $V$  denote  two separable Hilbert spaces such that $V$ is continuously and densely embedded into $H$ (we write $V \underset d \hookrightarrow H$). Let $V'$ be  the antidual of $V$ and denote by $\langle ., . \rangle$ the duality
between $V'$ and $V.$ As usual, we identify $H$ with  $H'$ and we obtain that  $V\underset d\hookrightarrow H\cong H'\underset d\hookrightarrow V'.$  These embeddings are continuous and dense (see e.g., \cite{Bre11}). Let
\[
    \fra: [0,T]\times V\times V \to \C
\]
be a\textit{ closed  non-autonomous form}, i.e., $\fra(t, .,.)$ is sesquilinear for all $t\in[0,T]$, $\fra(.,u,v)$ is measurable for all $u,v\in V,$
\begin{equation*}
    |\fra(t,u,v)| \le M \Vert u\Vert_V \Vert v\Vert_V \quad (t\in[0,T],u,v\in V)\qquad
\end{equation*}
and
\begin{equation*}
    \Re \fra (t,u,u) +\omega\Vert u\Vert_H^2\ge \alpha \|u\|^2_V \quad ( t\in [0,T], u\in V)
\end{equation*}
for some  $\alpha>0, M> 0$ and $\omega\in \R.$ The operator $\A(t) \in \L(V,V')$  associated with  $\fra(t,.,.)$  on $V'$ is defined for each $t\in[0,T]$ by
\[
\langle \A(t) u, v \rangle = \fra(t,u,v) \quad (u,v \in V).
\]

\par\noindent Seen as an unbounded operator on $V'$ with domain $D(\A(t)) = V,$ the
operator $- \A(t)$ generates a holomorphic $C_0-$semigroup $\T_t$ on $V'.$ The semigroup is bounded on a  sector if $\omega
=0$, in which case $\A$ is an isomorphism. We denote by $A(t)$ the part
of $\A(t)$ on $H$; i.e.,\
\begin{align*}
    D(A(t)) := {}& \{ u\in V : \A(t) u \in H \}\\
    A(t) u = {}& \A(t) u.
\end{align*}
It is a known fact that  $-A(t)$ generates a holomorphic $C_0$-semigroup  $T$ on $H$  and $T=\mathcal T_{\mid H}$ is the
restriction of the semigroup generated by $-\A$ to $H.$ Then $A(t)$ is the operator \textit{induced} by $\fra(t,.,.)$  on $H.$
 We refer to \cite{Ar06},\cite{Ka} and  \cite[Chap.\ 2]{Tan79}.\par\noindent
In 1961 J. L.  Lions proved that the non-autonomous Cauchy problem
\begin{equation}\label{nCP in V'}
\dot{u} (t)+\A(t)u(t)=f(t), \quad u(0)=u_0.
\end{equation}
 has \textit{$L^2$-maximal regularity in $V':$}
\begin{theorem}(\textrm{Lions})\label{wellposedness in V'}
 For all $f\in L^2(0,T;V^\prime)$ and $u_0\in H,$ the problem (\ref{nCP in V'})
has a unique solution $u \in \MR(V,V'):=L^2(0,T;V)\cap H^1(0,T;V').$
\end{theorem}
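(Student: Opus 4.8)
The plan is to prove Theorem~\ref{wellposedness in V'} by J.-L. Lions' variational (representation) method rather than by a Galerkin scheme, since it delivers existence, the regularity $u\in\MR(V,V')$ and the initial condition in one argument. As a preliminary normalization I would remove $\omega$: if $u$ solves $\dot u+\A(t)u=f$ with $u(0)=u_0$, then $u_\lambda(t):=e^{-\lambda t}u(t)$ solves $\dot u_\lambda+(\A(t)+\lambda)u_\lambda=e^{-\lambda t}f$ with the same initial datum, and the form $\fra(t,\cdot,\cdot)+\lambda(\cdot,\cdot)_H$ satisfies $\Re\big(\fra(t,v,v)+\lambda\|v\|_H^2\big)\ge\alpha\|v\|_V^2+(\lambda-\omega)\|v\|_H^2$. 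Choosing $\lambda\ge\omega$ we may therefore assume without loss of generality that $\fra$ is genuinely $V$-coercive, i.e.\ $\Re\fra(t,v,v)\ge\alpha\|v\|_V^2$ for all $t,v$.

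Next I would set up the variational framework on the trial space $\mathcal H:=L^2(0,T;V)$ and the test space $\Phi:=\{v\in H^1(0,T;V):v(T)=0\}$, the latter equipped with $\|v\|_\Phi^2:=\|v\|_{L^2(0,T;V)}^2+\|v(0)\|_H^2$. Integrating the equation against $v\in\Phi$ and shifting the time derivative onto $v$ leads me to define, for $u\in\mathcal H$ and $v\in\Phi$,
\[
E(u,v):=\int_0^T\fra(t,u(t),v(t))\,\d t-\int_0^T (u(t),\dot v(t))_H\,\d t,\qquad L(v):=\int_0^T\langle f(t),v(t)\rangle\,\d t+(u_0,v(0))_H .
\]
Since $u\in L^2(0,T;V)\hookrightarrow L^2(0,T;H)$ and $\dot v\in L^2(0,T;V)\hookrightarrow L^2(0,T;H)$, both $E$ and $L$ are well defined, $E(\cdot,v)$ is $\|\cdot\|_{\mathcal H}$-continuous for each fixed $v$, and $L$ is $\|\cdot\|_\Phi$-continuous. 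The key point is the weak coercivity of $E$ on $\Phi$: using $\Re\int_0^T(v,\dot v)_H\,\d t=\tfrac12\big(\|v(T)\|_H^2-\|v(0)\|_H^2\big)=-\tfrac12\|v(0)\|_H^2$ because $v(T)=0$, I obtain
\[
\Re E(v,v)=\int_0^T\Re\fra(t,v,v)\,\d t+\tfrac12\|v(0)\|_H^2\ge\min(\alpha,\tfrac12)\,\|v\|_\Phi^2 .
\]
Lions' representation theorem then produces $u\in\mathcal H=L^2(0,T;V)$ with $E(u,v)=L(v)$ for every $v\in\Phi$.

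It remains to interpret this $u$ as a solution. Testing with $v=\phi\otimes\xi$ for $\phi\in C_c^\infty(0,T)$ and $\xi\in V$ (so that $v(0)=v(T)=0$) shows that the distributional derivative of $t\mapsto\langle u(t),\xi\rangle$ equals $\langle f(t)-\A(t)u(t),\xi\rangle$; since $\A(t)\in\L(V,V')$ is bounded uniformly in $t$ we have $\A u\in L^2(0,T;V')$, whence $\dot u=f-\A u\in L^2(0,T;V')$ and $u\in\MR(V,V')$ with $\dot u+\A(t)u=f$ a.e. For the initial condition I would invoke the classical embedding $\MR(V,V')\hookrightarrow C([0,T];H)$ together with its integration-by-parts formula: for general $v\in\Phi$ this rewrites $E(u,v)=\int_0^T\langle f,v\rangle\,\d t+(u(0),v(0))_H$, and comparing with $E(u,v)=L(v)$ yields $(u(0)-u_0,v(0))_H=0$ for all $v\in\Phi$; as $\{v(0):v\in\Phi\}\supseteq V$ is dense in $H$, this forces $u(0)=u_0$. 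Finally, uniqueness follows by linearity from the energy identity: if $f=0$ and $u_0=0$, then $\tfrac{d}{dt}\|u(t)\|_H^2=2\Re\langle\dot u,u\rangle=-2\Re\fra(t,u,u)\le -2\alpha\|u\|_V^2\le 0$, so $\|u(t)\|_H^2\le\|u(0)\|_H^2=0$ and $u\equiv0$.

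I expect the main obstacle to be the weak coercivity step feeding the representation theorem: unlike Lax--Milgram, coercivity is only required on the smaller test space $\Phi$ in a norm that is weaker in the $H^1$-direction but records the trace $v(0)$, and matching this estimate to a continuous $L$ is what makes the boundary term $\tfrac12\|v(0)\|_H^2$ indispensable. A secondary delicate point is rigorously recovering $u(0)=u_0$, which hinges on the embedding into $C([0,T];H)$ and the associated integration-by-parts formula being available only after $u\in\MR(V,V')$ has been established.
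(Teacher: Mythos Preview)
Your argument is correct and is precisely the classical proof via Lions' representation theorem. Note, however, that the paper does not supply its own proof of Theorem~\ref{wellposedness in V'}: it merely attributes the result to Lions, mentioning exactly the representation-theorem route you take (as well as a Galerkin variant and Tanabe's proof), and separately points to the authors' alternative approximation-based proof in \cite{LASA14}. So your proof agrees with the method the paper cites as Lions' original one; the only genuinely different strategy on the table is the approximation scheme of \cite{LASA14}, where one discretizes $\A(\cdot)$ by step functions $\A_\Lambda^S$, solves the resulting piecewise-autonomous problems, and passes to the limit in $\MR(V,V')$. Compared with that approach, your variational proof is shorter and self-contained, requiring no convergence analysis; the approximation method, by contrast, yields constructive approximants and is the machinery the present paper builds on for the $\MR(V,H)$ results.
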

Lions proved this result in \cite{Lio61} (see also \cite[Chapter 3]{Tho03}) using a  representation theorem
of linear functionals due to himself and  usually known in the literature as  \textit{Lions's representation Theorem} and
  using  Galerkin's method in  \cite[ XVIII
Chapter 3, p. 620]{DL88}. We refer also to an alternative proof given by Tanabe \cite[Section 5.5]{Tan79}.

\par In Theorem \ref{wellposedness in V'} only measurability of $\fra: [0,T]\times V\times V \to \C$ with respect to the time variable is required to have a solution $u\in MR(V,V').$
 Nevertheless, in applications to boundary valued problems, like heat equations with non-autonomous Robin-boundary-conditions
 or Schr\"odinger equations with time-dependent potentials, this is not sufficient. One is more interested in
 \textit{$L^2$-maximal regularity in $H$} rather than in $V',$ i.e., in solutions which belong to \begin{equation}
MR(V,H):=L^2(0,T;V)\cap H^1(0,T;H)
\end{equation}
 rather than in  $MR(V,V').$ Lions  asked  a long time before  in \cite[p.\ 68]{Lio61} whether
 the solution $u$ of (\ref{nCP in V'}) belongs to  $MR(V,H)$ in the case where $\fra(t;u,v)=\overline{\fra(t;u,v)}$ and $t\mapsto \fra(t;u,v)$ is only measurable.
 
\par\noindent Dier \cite{D1} has recently showed that in general the unique assumption of measurability is not sufficient to have $u\in MR(V,H).$ However, several progress are already has been done by Lions 
\cite[p.~68, p.~94, ]{Lio61}, \cite[Theorem~1.1, p.~129]{Lio61} and
\cite[Theorem~5.1, p.~138]{Lio61} and also by Bardos \cite{Bar71}
under additional regularity assumptions on the form $\fra,$   the
initial value $u_0$ and the inhomogeneity $f.$ More recently, this
problem has been studied  with some progress and different
approachs by  Arendt, Dier, Laasri and Ouhabaz \cite{ADLO14}, Arendt
and Monniaux \cite{AM}, Ouhabaz \cite{O15}, Dier \cite{D2}, Haak and Ouhabaz
\cite{OH14}, Ouhabaz and Spina
\cite{OS10}. Results on multiplicative perturbations are also established in \cite{ADLO14,D2,AuJaLa14}. 
\par\noindent In \cite{LASA14} we proved Theorem \ref{wellposedness in V'} by a completely different
 approach developed in \cite{ELLA13} and \cite{LH}. The method uses an appropriate approximation of the   $\A(.).$  Namely, let
$\Lambda:=(0=\lambda_0<\lambda_1<...<\lambda_{n+1}=T)$ be a
subdivision of $[0,T].$ Consider the following approximation
$\A_\Lambda^{S}:\ [0,T]\rightarrow \mathcal{L}(V,V')$ of $\A$ given
by
\[ \A_\Lambda^{S}
(t):=\left\{%
\begin{array}{ll}
    \A_k & \hbox{for } \lambda_k\leq t<\lambda_{k+1},\\
    \A_{n} & \hbox{for } t=T, \\
\end{array}%
\right. \]  with

 \[\displaystyle \A_ku
:=\frac{1}{\lambda_{k+1}-\lambda_k}
\int_{\lambda_k}^{\lambda_{k+1}}\A(r)u{\rm  d}r\ \ \  (u\in V,
k=0,1,...,n).\] ("S" stands for step). The integral above makes sense since  $t\mapsto \mathcal A(t)u$ is Bochner integrable on $[0,T]$ with values in $V'$ for all $u\in V.$ Note that $\|\mathcal A(t)u\|_{V'}\leqslant M \|u\|_V$ for all $u\in V$ and all $t\in [0,T]$. It is worth to mention that the mapping $t\mapsto \mathcal A(t)$ is strongly measurable by the Dunford-Pettis Theorem \cite{ABHN11} since the spaces are assumed to be separable and $t\mapsto \mathcal A(t)$ is weakly measurable.
\par\noindent It has been proved in \cite[Theorem 3.2]{LASA14} that for all $u_0\in H$ and
$f\in L^2(0,T;V'),$  the non-autonomous problem
 \begin{equation}\label{nCP in V'0}
\dot{u}_\Lambda (t)+\A_\Lambda(t)u_\Lambda(t)=f(t), \quad u_\Lambda(0)=u_0
\end{equation}
 \noindent has an (explicit) unique solution $u_{\Lambda}\in MR(V,V'),$ and $(u_\Lambda)$  converges weakly in $MR(V,V')$ as $\vert
\Lambda\vert\rightarrow 0$ to the unique solution $u$ of  $(\ref{nCP in V'})$.
If we consider $u_0\in V$ and $f\in L^2(0,T; H)$ then the solution $u_\Lambda$ of  (\ref{nCP in V'0}) belongs to  $\MR(V,H)\cap C([0,T]; V)$ (see \cite{LH}, \cite{LASA14}). If moreover,  $\fra$ is assumed to be  piecewise
Lipschitz-continuous on $[0,T]$  then we obtain the convergence of $u_\Lambda$ in $\MR(V,H)$ \cite{LASA14} (see also
\cite{ADLO14}).\par\noindent In this paper we are concerned with the
recent result obtained in \cite{D2}. Instead of functions that are
constant on each subinterval $[\lambda_k,\lambda_{k+1}[$, we will
consider here those  that are linear in time.
\section{Preliminary}
Let $X$ be a Banach space and $T>0.$  Recall that a point $t\in
[0,T]$ is said to be a \textit{Lebesgue point} of a function  $f:\
[0,T]\longrightarrow X$ if \[\lim\limits_{h\to
0}\frac{1}{h}\int_t^{t+h}\|f(s)-f(t)\|_Xds=0.\] Clearly each point
of continuity of $f$ is a Lebesgue point. By \cite[Proposition
1.2.2]{ABHN11} if $f$ is Bochner integrable then almost all point
are Lebesgue points.
 \par\noindent Let $D$ be an other Banach space such that $D$ is continuously and densely embedded
 into $X$ and let $A: [0,T]\rightarrow \L(D,X)$ be a bounded and strongly measurable function, i.e, for each $x\in D$ the
  function $A(.)x:\ [0,T]\longrightarrow X.$ is measurable and
  bounded.
\par\noindent  Let
$\Lambda:=(0=\lambda_0<\lambda_1<...<\lambda_{n+1}=T)$ be a
subdivision of $[0,T].$ We consider the following approximations of
${A}:\ [0,T]\rightarrow \mathcal{L}(D,X)$ by step operator function
${A}_\Lambda^S:\ [0,T]\rightarrow \mathcal{L}(D,X)$ and piecewise linear
operator function ${A}_\Lambda^L:\ [0,T]\rightarrow
\mathcal{L}(D,X)$  given by \[{A}_\Lambda^S
(t):=\left\{%
\begin{array}{ll}
    A_k & \hbox{for } \lambda_k\leq t<\lambda_{k+1},\\
    A_{n} & \hbox{for } t=T, \\
\end{array}%
\right.\]

\noindent and
\begin{equation}
\begin{aligned} A_\Lambda^L(t):=\frac{\lambda_{k+1}-t}{\lambda_{k+1}-\lambda_k}A_{k}+\frac{t-\lambda_k}{\lambda_{k+1}-\lambda_k}A_{k+1},
\qquad \hbox { for } t\in [\lambda_k,\lambda_{k+1}],
\end{aligned}
\end{equation}

 \noindent where   \[\displaystyle A_kx
:=\frac{1}{\lambda_{k+1}-\lambda_k}
\int_{\lambda_k}^{\lambda_{k+1}}A(r)x{\rm  d}r\ \ \  (x\in D,
k=0,1,...,n).\] Let
$|\Lambda|:=\displaystyle\max_{j=0,1,...,n}(\lambda_{j+1}-\lambda_{j})$
denote the mesh of the subdivision $\Lambda.$ Assume that the subdivision $\Lambda$ is uniform, i.e., $\lambda_{k+1}-\lambda_{k}=T/n=\vert\Lambda\vert$ for all $k=0,1,...,n.$ In the following
Lemma, we show that $ A_\Lambda^S $ and  $ A_\Lambda^L $ converge
strongly and almost everywhere to $A$ as $\vert \Lambda\vert
\rightarrow 0,$ from which the strong convergence with respect to
$L^p$-norm ($p\in[1,\infty)$) follows.
{\lemma\label{lemma:Approximation classique-convergence des
operators} Let $A_{_\Lambda}^S: [0,T]\rightarrow \mathcal{L}(D,X)$
be given as above. Then:
\par $i)$ For all $x\in D$ we have $A_{_\Lambda}^S(t)x\rightarrow  A(t)x$  $t$-a.e. on
$[0,T]$ as $|\Lambda|\rightarrow 0.$
\par $ii)$ $A_{_\Lambda}^S(.)u_{_\Lambda}(.)\rightarrow A(.)u(.)$ in $L^p(0,T; X)$ as
$|\Lambda|\longrightarrow 0$ if $u_\Lambda\in L^p(0,T; D)$ such that $u_\Lambda\rightarrow u$ in $ L^p(0,T; D).$
}
\begin{proof} Let $C\geq 0$
such that $\Vert A(t)x\Vert_X\leq C\Vert x\Vert_D $ for all $x\in
D$ and for almost every  $t\in[0,T].$  We have $\Vert A_kx\Vert_X\leq C\Vert x\Vert_D$ for
all $x\in D$ and $k=0,1,...,n.$ Let $t$ be any Lebesgue point of $ A(.)x$. Let $k\in
\{0,1,...,n\}$ such that $t\in [\lambda_k,\lambda_{k+1})$. Then
\begin{align*}
&{A}_{_\Lambda}^S(t)x-A(t)x=\frac{1}{\lambda_{k+1}-\lambda_k}\int_{\lambda_k}^{\lambda_{k+1}}(A(r)x-A(t)x){\rm
d}r\\&
=\frac{1}{\lambda_{k+1}-\lambda_k}\int_{\lambda_k}^{t}(A(r)x-A(t)x)
{\rm d}r+
\frac{1}{\lambda_{k+1}-\lambda_k}\int_{t}^{\lambda_{k+1}}(A(r)x-A(t)x){\rm
d}r\\&\\&=(\frac{t-\lambda_k}{\lambda_{k+1}-\lambda_k})\frac{1}{t-\lambda_k}\int_{\lambda_k}^{t}(A(r)x-A
(t)x){\rm d}r\\&\ \ \ \ \ \ \ \ \ \ \ \ \ +
(\frac{\lambda_{k+1}-t}{\lambda_{k+1}-\lambda_k})\frac{1}{\lambda_{k+1}-t}\int_{t}^{\lambda_{k+1}}(A(r)x-A
 (t)x){\rm
d}r.\end{align*} \noindent It follows that $ {A}_{_\Lambda}^S(t)x-A(t)x\rightarrow 0$ as
$\vert \Lambda\vert \rightarrow 0.$  Since
almost all points of $[0,T]$ are  Lebesgue points of $A(.)x$  the first assertion follows
\par\noindent For the second assetion let $x\in D$ and let
$\Omega$ be a measurable subset of $[0, T].$ We set $w=x\otimes
1_\Omega.$ Then  $\Vert A_{_\Lambda}^S w-{ A}w\Vert_{L^p(0,T,X)}^p=
\int_\Omega\Vert A_{_\Lambda}^S(t)x-A(t)x\Vert_{X}^p{\rm
d}t\rightarrow 0$ as $|\Lambda|\longrightarrow 0$ by $i)$ and
Lebesgue's Theorem. From which follows that $\Vert {
A}_{_\Lambda}^Sw-{ A}w\Vert_{L^p(0,T;X)}\rightarrow 0$ as
$|\Lambda|\longrightarrow 0$ for all simple function $w$ and thus
for all $w\in L^p(0,T;D).$ Let now $w_\Lambda\in L^p(0,T; D)$
such that $w_\Lambda\rightarrow w$ in $ L^p(0,T;D).$ Then \[
\|{A}_{\Lambda}^Sw_{\Lambda}-{A}w\|_{L^p(0,T;X)}\leq
C\|w_{\Lambda}-w\|_{L^p(0,T;D)}+\|{A}_{\Lambda}^Sw-{A}w\|_{L^p(0,T;X)}.\]
Thus $(ii)$  holds.
\end{proof}
 Instead of  functions that are constant on each subinterval $[\lambda_k,\lambda_{k+1}[$, we consider now those  that are linear.
 \begin{lemma}\label{Proposition: Approximation Lipschitz}
 Let $A:[0,T]\longrightarrow \L(D,X)$ be a bounded and strongly measurable function. Then the following statements hold:
\medskip

$1.$ For all $x\in D$ we have $ A_{_\Lambda}^L(t)x\rightarrow  A(t)x$  $t$-a.e. on $[0,T]$ as $|\Lambda|\rightarrow 0.$
\medskip

$2.$ $A_{_\Lambda}^L(.)u_{_\Lambda}(.)\rightarrow A(.)u(.)$ in $L^p(0,T; X)$ as
$|\Lambda|\longrightarrow 0$ if $u_\Lambda\in L^p(0,T; D)$ such that $u_\Lambda\rightarrow u$ in $ L^p(0,T; D).$

\end{lemma}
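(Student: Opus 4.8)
The plan is to prove Lemma 2.5 by reducing it to the already-established Lemma 2.3. The crucial observation is that the piecewise linear interpolant $A_\Lambda^L(t)$ can be written as a convex combination $A_\Lambda^L(t) = \theta_k(t) A_k + (1-\theta_k(t)) A_{k+1}$ on each subinterval, where $\theta_k(t) = (\lambda_{k+1}-t)/(\lambda_{k+1}-\lambda_k) \in [0,1]$. Each coefficient $A_k$ is the same block average as in the step approximation, so the two operator functions share the same building blocks and differ only in how those blocks are weighted and where they sit in time.

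To establish part $1$, I would fix $x \in D$ and a Lebesgue point $t$ of $A(\cdot)x$, and estimate $\|A_\Lambda^L(t)x - A(t)x\|_X$ directly. Writing $A(t)x = \theta_k(t)A(t)x + (1-\theta_k(t))A(t)x$ and inserting it against the convex combination gives the bound
\begin{equation*}
\|A_\Lambda^L(t)x - A(t)x\|_X \le \theta_k(t)\|A_k x - A(t)x\|_X + (1-\theta_k(t))\|A_{k+1}x - A(t)x\|_X.
\end{equation*}
The first term is exactly $\theta_k(t)\|A_\Lambda^S(t)x - A(t)x\|_X$ and vanishes a.e.\ as $|\Lambda|\to 0$ by Lemma 2.3(i). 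For the second term involving $A_{k+1}$, I would reuse the same Lebesgue-point averaging argument from the proof of Lemma 2.3: since $A_{k+1}x$ is the average of $A(\cdot)x$ over the \emph{adjacent} interval $[\lambda_{k+1},\lambda_{k+2}]$, and $t$ sits within distance at most $2|\Lambda|$ of every point of that interval, the difference $\|A_{k+1}x - A(t)x\|_X$ is controlled by averages of $\|A(r)x - A(t)x\|_X$ over a shrinking neighborhood of $t$, which tends to $0$ because $t$ is a Lebesgue point. Since the coefficients $\theta_k(t)$ and $1-\theta_k(t)$ are uniformly bounded by $1$, the whole expression goes to $0$ $t$-a.e.

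For part $2$, the argument is identical in structure to the proof of Lemma 2.3(ii), so I would simply transcribe it. First I note the uniform bound $\|A_\Lambda^L(t)x\|_X \le C\|x\|_D$, which follows from the convexity since each $\|A_k x\|_X \le C\|x\|_D$. Then for a simple function $w = x \otimes 1_\Omega$, part $1$ together with dominated convergence gives $\|A_\Lambda^L w - Aw\|_{L^p} \to 0$; density of simple functions extends this to all $w \in L^p(0,T;D)$; and finally the triangle-inequality splitting
\begin{equation*}
\|A_\Lambda^L u_\Lambda - Au\|_{L^p(0,T;X)} \le C\|u_\Lambda - u\|_{L^p(0,T;D)} + \|A_\Lambda^L u - Au\|_{L^p(0,T;X)}
\end{equation*}
handles the case of a converging sequence $u_\Lambda \to u$.

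The only genuinely new work beyond Lemma 2.3 is the pointwise estimate for the $A_{k+1}$ term in part $1$, since that average is taken over a neighboring interval rather than the one containing $t$. I expect this to be the main (though minor) obstacle: one must verify that a Lebesgue point of $A(\cdot)x$ also controls averages over an adjacent interval of comparable length, which requires the elementary but essential remark that the enlarged averaging window $[\lambda_k, \lambda_{k+2}]$ still shrinks to $t$ as $|\Lambda| \to 0$. Everything else is a routine repetition of the step-function case.
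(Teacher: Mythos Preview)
Your proposal is correct and follows essentially the same route as the paper. The paper writes the identical convex decomposition $A_\Lambda^L(t)x - A(t)x = I + II$, disposes of $I$ via Lemma~2.1 exactly as you do, and for $II$ splits $A_{k+1}x - A(t)x$ into the two one-sided averages over $[t,\lambda_{k+2}]$ and $[t,\lambda_{k+1}]$ with bounded coefficients (bounded because the subdivision is uniform), which is precisely the Lebesgue-point estimate over the adjacent interval that you outline; part~2 is likewise referred back verbatim to the step-function case.
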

\begin{proof} Let $x\in D$ and let $t\in[0,T]$ be an arbitrary Lebesgue point of $A(.)x$ and $k\in \{0,1,...,n\}$ be such that $t\in [\lambda_k,\lambda_{k+1}].$ Then
\begin{align*}A_{_\Lambda}^L(t)x-A(t)x&=
\frac{\lambda_{k+1}-t}{\lambda_{k+1}-\lambda_k}\big(A_{k}x-A(t)x\big)+\frac{t-\lambda_k}{\lambda_{k+1}-\lambda_k}\big(A_{k+1}x-A(t)x\big)
\\&=I+II
\end{align*}
For the first term $I$  we have \[I=\frac{\lambda_{k+1}-t}{\lambda_{k+1}-\lambda_k}\big({A}_{_\Lambda}^S(t)x-A(t)x\big)\]
which converges  to zero as $\vert\Lambda\vert\longrightarrow 0$ by Lemma \ref{lemma:Approximation classique-convergence des operators}.
Now we show that $II$ converges also to zero as $\vert\Lambda\vert$ goes to $0$. Indeed, we have
\begin{align}\nonumber A_{k+1}x-A(t)x&=\frac{1}{\lambda_{k+2}-\lambda_{k+1}}\int_{t}^{\lambda_{k+2}}(A(r)-A(t))xdr\\&\nonumber\qquad\qquad-
\frac{1}{\lambda_{k+2}-\lambda_{k+1}}\int_{t}^{\lambda_{k+1}}(A(r)-A(t))xdr
\\\label{eq-proof-affine approximation-operator1}&=(\frac{\lambda_{k+2}-t}{\lambda_{k+2}-\lambda_{k+1}})\frac{1}{\lambda_{k+2}-t}\int_{t}^{\lambda_{k+2}}(A(r)-A(t))xdr
\\\label{eq-proof-affine approximation-operator2}&\qquad\qquad-(\frac{\lambda_{k+1}-t}{\lambda_{k+2}-\lambda_{k+1}})\frac{1}{\lambda_{k+1}-t}\int_{t}^{\lambda_{k+1}}(A(r)-A(t))xdr
\end{align}
Using again \cite[Proposition 1.2.2]{ABHN11} we obtain that both terms in $(\ref{eq-proof-affine approximation-operator1})$ and $(\ref{eq-proof-affine approximation-operator2})$ converges to $0$ as $\vert\Lambda\vert\longrightarrow 0.$ Consequently $II$ converges to $0.$ The claim follows since $t$ is arbitrary Lebesgue point of $A(.)x.$ The proof of $(2)$ is the same as the proof  of $(ii)$ in Lemma \ref{lemma:Approximation classique-convergence des operators}.
\end{proof}
\section{Approximation and convergence}


In this section  $H,V$ are  complex  separable Hilbert spaces such that $V \underset d \hookrightarrow H.$ Let $T>0$ and let
\[
    \fra: [0,T]\times V\times V \to \C
\]
be a non-autonomous closed form. This  means that $\fra(t, .,.)$ is sesquilinear for all $t\in[0,T]$, $\fra(.,u,v)$ is measurable for all $u,v\in V,$
\begin{equation}\label{eq:continuity-nonaut}
    |\fra(t,u,v)| \le M \Vert u\Vert_V \Vert v\Vert_V \quad (t\in[0,T],u,v\in V)\qquad
\end{equation}
and
\begin{equation}\label{eq:Ellipticity-nonaut}
    \Re \fra (t,u,u) +\omega\Vert u\Vert\ge \alpha \|u\|^2_V \quad ( t\in [0,T], u\in V)
\end{equation}
for some  $\alpha>0, M\geq 0$ and $\omega\in \R.$ We assume in addition that $\fra$ is \emph{symmetric}; i.e.,
\begin{equation*}\label{symmetric}
    \fra(t,u,v) = \overline{\fra(t,v,u)} \quad (t \in [0,T], u,v \in V).
\end{equation*}
\noindent  For almost every $t\in[0,T]$ we denote by $\A(t)\in \L(V,V')$  the operator associated with the form $\fra(t,.,.)$ in $V'.$
 The non-autonomous Cauchy problem (\ref{nCP in V'}) has $L^2$-\textit{maximal regularity in} $V',$ i.e., for given $f\in L^2(0,T;V')$ and $u_0\in H,$ (\ref{nCP in V'}) has a unique solution $u$ in $MR(V,V')=L^2(0,T;V)\cap H^1(0,T;V').$ The maximal regularity space
$\MR (V,V')$ is continuously embedded into $C([0,T],H)$ and  if  $u\in \MR (V,V')$ then
the function  $\|u(.)\|^2$ is absolutely continuous on $[0,T]$ and
\begin{equation}\label{poduct rule classique}
\frac{d}{dt}\Vert u(.)\Vert^2_H=2\Re\langle\dot u(.),u(.)\rangle
\end{equation}
see e.g., \cite[Chapter III, Proposition 1.2]{Sho97} or \cite[Lemma 5.5.1]{Tan79}.
\par\noindent For simplicity  we may assume without loss of generality that  $\omega=0$ in (\ref{eq:Ellipticity-nonaut}). In fact, let $u\in MR(V,V')$ and let $v:=e^{-w.}u.$ Then $v\in MR(V,V')$ and it satisfies
\begin{equation*}
  \dot{v}(t)+(\omega+\mathcal A(t))v(t)=e^{-wt}f(t) \ \ \  t{\rm
-a.e.}  \hbox{ on} \ [0,T],\
 \ \ \ \ v(0)=0 \
\end{equation*}
if and only if $u$ satisfies (\ref{nCP in V'}).
Throughout this section $\omega=0$ will be our assumption.
\par\noindent Let $\Lambda=(0=\lambda_0<\lambda_1<...<\lambda_{n+1}=T)$ be
a uniform subdivision of $[0,T].$ Let \[\fra_k:V \times V \to \C\ \ \hbox{ for } k=0,1,...,n\] be the family of sesquilinear  forms given for all $u,v\in V$ and $k=0,1,...,n$ by
\begin{equation}\label{eq:form-moyen integrale}
 \fra_k(u,v):=\frac{1}{\lambda_{k+1}-\lambda_k}
\int_{\lambda_k}^{\lambda_{k+1}}\fra(r;u,v){\rm  d}r.\ \ \
\end{equation}
Remark that $\fra_k$ satisfies (\ref{eq:continuity-nonaut}) and (\ref{eq:Ellipticity-nonaut}) for all $k=0,1,...n.$ The associated operators are denoted by $\A_k\in \L(V,V')$ and are given for all $u\in V$ and $k=0,1,...,n$ by
\begin{equation}\label{eq:op-moyen integrale}
 \A_ku :=\frac{1}{\lambda_{k+1}-\lambda_k}
\int_{\lambda_k}^{\lambda_{k+1}}\A(r)u{\rm  d}r.\ \ \  \end{equation}
 This  integral  is well defined. Indeed, the mapping $t\mapsto \mathcal A(t)$ is strongly measurable by the Pettis Theorem \cite{ABHN11} since  $t\mapsto \mathcal A(t)$ is weakly measurable and the spaces are assumed to be separable. On the other hand, $\|\mathcal A(t)u\|_{V'}\leqslant M \|u\|_V$ for all $u\in V$ and a.e. $t\in [0,T].$ Thus $t\mapsto \mathcal A(t)u$ is Bochner integrable on $[0,T]$ with values in $V'$ for all $u\in V.$
\par\noindent The function
\begin{equation}\label{form: approximation formula1}\fra_\Lambda^L:[0,T]\times V \times V \to \C\end{equation}
defined for $t\in [\lambda_k,\lambda_{k+1}]$ by
\begin{equation}\label{form: approximation formula2}
 \fra_{\Lambda}^L(t;u,v):=\frac{\lambda_{k+1}-t}{\lambda_{k+1}-\lambda_k}\fra_{k}(u,v)+\frac{t-\lambda_k}{\lambda_{k+1}-\lambda_k}\fra_{k+1}(u,v) \ \ (u,v\in V),
\end{equation}
 is a symmetric non-autonomous closed form and Lipschitz continuous with respect to the time variable $t\in [0,T].$ The associated time dependent operator is denoted by
  \begin{equation}\label{Operator: approximation formula1}\A_\Lambda^L(.):[0,T]\to \L(V,V')
\end{equation}
 and is given by
\begin{equation}\label{Operator: approximation formula2}\
 \A_{\Lambda}^L(t):=\frac{\lambda_{k+1}-t}{\lambda_{k+1}-\lambda_k}\A_{k}+\frac{t-\lambda_k}{\lambda_{k+1}-\lambda_k}\A_{k+1} \hbox{   for } t\in [\lambda_k,\lambda_{k+1}]
\end{equation}
Since $\fra_k, k=0,1,...,n$ are symmetric, 
 the function $\fra_k(v(.))$ belongs to $W^{1,1}(a,b)$ and the following rule formula 
\begin{equation}\label{rule-formula}
\dot\fra_k(v(t)):=\frac{d}{dt}\fra_k(v(t))=2(A_kv(t)\mid \dot v(t)) \hbox{  for a.e. } t\in [a,b],
\end{equation}
holds  whenever $v\in H^1(a,b,H)\cap L^2(a,b,D(A_k)),$ for all $[a,b], k=0,1,...,n$   where $A_k$ is the part of $\A_k$ in $H.$ For the proof  we refer to \cite[Lemma 3.1]{AC10}.

\begin{theorem}\label{ADLO14} Given $f\in L^2(0,T;H)$ and $u_0\in V,$ there is a unique solution $u_\Lambda \in \MR(V,H)$ of
\begin{equation}\label{Probleme approche:  H}
\dot{u}_\Lambda (t)+\A_\Lambda^L(t)u_\Lambda(t)=f(t), \quad u_\Lambda(0)=u_0.
\end{equation}
Moreover, $t\mapsto \fra_\Lambda(t,u_\Lambda(t))\in W^{1,2}(0,T)$ and
\begin{equation}\label{product rule22}
2\Re(\A_\Lambda^L(t)u_\Lambda(t)\mid \dot{u}_\Lambda(t))_H=\frac{d}{dt}(\fra_\Lambda^L(t;u_\Lambda(t))
-\dot{\fra}_\Lambda^L(t;u_\Lambda(t))\qquad \ t.\hbox{a.e}
\end{equation}
\end{theorem}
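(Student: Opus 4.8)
The plan is to first secure existence and uniqueness of $u_\Lambda\in MR(V,H)$ and then to read off (\ref{product rule22}) as the chain rule for $t\mapsto\fra_\Lambda^L(t;u_\Lambda(t),u_\Lambda(t))$. Observe first that $\fra_\Lambda^L$ is symmetric, satisfies (\ref{eq:continuity-nonaut})--(\ref{eq:Ellipticity-nonaut}) with the same constants $M,\alpha$ as $\fra$ (being a convex combination of the $\fra_k$), and is Lipschitz continuous in $t$: on each $[\lambda_k,\lambda_{k+1}]$ its partial $t$-derivative is the fixed bounded form $\dot\fra_\Lambda^L=(\fra_{k+1}-\fra_k)/(\lambda_{k+1}-\lambda_k)$. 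For such symmetric Lipschitz forms the $L^2$-maximal regularity in $H$ recalled in the introduction applies (cf.\ \cite{ADLO14,LASA14}), producing the unique $u_\Lambda\in MR(V,H)$. If a self-contained argument is wanted I would run a Galerkin scheme in a basis $(e_j)\subset V$ orthonormal in $H$ and orthogonal in $V$: the decisive bound comes from testing the finite-dimensional equation with $\dot u_m$ and using that, since $u_m$ is valued in a fixed finite-dimensional subspace of $V$, the elementary identity $\frac{d}{dt}\fra_\Lambda^L(t;u_m(t))=\dot\fra_\Lambda^L(t;u_m(t))+2\Re\fra_\Lambda^L(t;\dot u_m(t),u_m(t))$ holds; after integration, using (\ref{eq:Ellipticity-nonaut}) and $|\dot\fra_\Lambda^L(t;u_m)|\le L\|u_m\|_V^2$, this bounds $\int_0^T\|\dot u_m\|_H^2\,dt$ by $C(\|f\|_{L^2(0,T;H)}^2+\|u_0\|_V^2)$ uniformly in $m$, and a weak limit yields $u_\Lambda\in MR(V,H)$ with $\A_\Lambda^L(\cdot)u_\Lambda(\cdot)=f-\dot u_\Lambda\in L^2(0,T;H)$.

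Granting this, (\ref{product rule22}) is the chain rule. Differentiating the explicit and the $u_\Lambda$-dependence of $F(t):=\fra_\Lambda^L(t;u_\Lambda(t),u_\Lambda(t))$ and using symmetry to merge the two arguments gives, formally, $\dot F(t)=\dot\fra_\Lambda^L(t;u_\Lambda(t))+2\Re\fra_\Lambda^L(t;\dot u_\Lambda(t),u_\Lambda(t))$. Since $u_\Lambda(t)\in V$ with $\A_\Lambda^L(t)u_\Lambda(t)=f(t)-\dot u_\Lambda(t)\in H$ for a.e.\ $t$, one has $\fra_\Lambda^L(t;\dot u_\Lambda(t),u_\Lambda(t))=\overline{(\A_\Lambda^L(t)u_\Lambda(t)\mid\dot u_\Lambda(t))_H}$, whence $2\Re\fra_\Lambda^L(t;\dot u_\Lambda(t),u_\Lambda(t))=2\Re(\A_\Lambda^L(t)u_\Lambda(t)\mid\dot u_\Lambda(t))_H$; rearranging is exactly (\ref{product rule22}). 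Absolute continuity of $F$ (indeed $F\in W^{1,1}(0,T)$) follows because the right-hand side lies in $L^1(0,T)$: $|\dot\fra_\Lambda^L(t;u_\Lambda(t))|\le C\|u_\Lambda(t)\|_V^2\in L^1$, and $(\A_\Lambda^L(t)u_\Lambda(t)\mid\dot u_\Lambda(t))_H\in L^1$ by Cauchy--Schwarz.

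The main obstacle is the \emph{rigorous} justification of this chain rule, and it stems from the time-dependence of the domain of $A_\Lambda^L(t)$ together with the fact that $u_\Lambda$ lies only in $L^2(0,T;V)$, so $t\mapsto u_\Lambda(t)$ has no pointwise continuity into $V$. Concretely, freezing the form at $s$ and applying the exact symmetric identity gives $\fra_\Lambda^L(s;u_\Lambda(t))-\fra_\Lambda^L(s;u_\Lambda(s))=2\Re(\A_\Lambda^L(s)u_\Lambda(s)\mid u_\Lambda(t)-u_\Lambda(s))_H+\fra_\Lambda^L(s;u_\Lambda(t)-u_\Lambda(s))$; while the first term divided by $t-s$ tends to $2\Re(\A_\Lambda^L(t)u_\Lambda(t)\mid\dot u_\Lambda(t))_H$ at Lebesgue points, the nonnegative remainder $\fra_\Lambda^L(s;u_\Lambda(t)-u_\Lambda(s))/(t-s)\ge\alpha\|u_\Lambda(t)-u_\Lambda(s)\|_V^2/(t-s)$ cannot be shown to vanish pointwise, since $\|u_\Lambda(t)-u_\Lambda(s)\|_V$ is not controlled by $\dot u_\Lambda\in L^2(0,T;H)$. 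I would bypass this in one of two ways: either invoke the non-autonomous analogue of (\ref{rule-formula}) for the Lipschitz symmetric family $(\fra_\Lambda^L(t,\cdot,\cdot))_t$, established as in \cite[Lemma 3.1]{AC10}; or, remaining self-contained, pass to the limit in the Galerkin identity above, the one extra ingredient being strong convergence $u_m\to u_\Lambda$ in $L^2(0,T;V)$, which for coercive symmetric forms is deduced from weak convergence together with convergence of the energies by the uniform convexity furnished by (\ref{eq:Ellipticity-nonaut}).
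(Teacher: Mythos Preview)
Your argument for existence and uniqueness matches the paper's: both invoke the piecewise-$C^1$ (Lipschitz) dependence of $\fra_\Lambda^L$ on $t$ and cite \cite{ADLO14,LASA14}. For the product rule (\ref{product rule22}), your option~(a) of citing a non-autonomous analogue of (\ref{rule-formula}) is also what the paper briefly records (it refers to \cite[Theorem~3.2]{ADLO14}). The divergence is in the self-contained justification. You propose a Galerkin limit, with the extra step of upgrading weak to strong $L^2(0,T;V)$ convergence via energy convergence and uniform convexity. The paper instead exploits the piecewise-affine structure directly: on each $[\lambda_k,\lambda_{k+1}]$ it regularizes $u_\Lambda$ by the analytic semigroups $e^{-\delta A_k}$ and $e^{-\delta A_{k+1}}$, so that the \emph{autonomous} rule (\ref{rule-formula}) applies to $\fra_k$ and $\fra_{k+1}$ separately; it then integrates by parts in $t$ against the affine weights $(\lambda_{k+1}-t)/(\lambda_{k+1}-\lambda_k)$ and $(t-\lambda_k)/(\lambda_{k+1}-\lambda_k)$, lets $\delta\to 0$ using that $(e^{-tA_k})_{t\ge 0}$ restricts to a $C_0$-semigroup on $V$, and telescopes over $k$ to obtain the integrated identity equivalent to (\ref{product rule22}). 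The semigroup-regularization route stays with $u_\Lambda$ itself and needs only the two fixed autonomous operators $A_k,A_{k+1}$ on each piece, avoiding any approximation scheme and the strong-convergence upgrade; your Galerkin route is more generic and would work even without the piecewise-linear structure, but here it imports more machinery than the situation requires.
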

\begin{proof}
 The first part of the theorem follows from \cite{Lio61}, \cite[Theorem 4.2]{ADLO14},\cite{LASA14} since
 $t\mapsto \fra_\Lambda^L(t,u,v)$ is piecewise $C^{1}$ for all $u,v\in V.$ The rule product  \label{product rule}
   follows also from \cite[Theorem 3.2]{ADLO14}, but it can be also seen directly from
   \begin{align*} \fra_\Lambda^L(t;u_\Lambda(t))&=\int_0^t2\Re(\A_\Lambda(s)u_\Lambda(s)|\dot u(s))_Hds
    \\&+\int_0^t\dot\fra_\Lambda^L(r,u_\Lambda(r))dr+\fra_\Lambda^L(0,u_0) \ (t\in [0,T])
    \end{align*}
 which holds for all $t\in [0,T].$ In fact, let $\delta>0,$ $t\in[0,T]$ be arbitrary and let $l\in \{0,1,...,n\}$ be such that $t\in[\lambda_l,\lambda_{l+1}].$ 
 In order to apply the classical product rule  (\ref{rule-formula}), we seek regularizing $u_\Lambda$  by multiplying  with $e^{-\delta A_k}$ and $e^{-\delta A_{k+1}}.$ Then
  \begin{align*}
  \int_{\lambda_k}^{\lambda_{k+1}}&(\A_\Lambda(s)u_\Lambda(s)|\dot u_\Lambda(s))_H ds
     \\&=\lim\limits_{\delta\to 0}
\int_{\lambda_k}^{\lambda_{k+1}}\Big(\frac{\lambda_{k+1}-r}{\lambda_{k+1}-\lambda_k}(\A_k e^{-\delta A_k}u_\Lambda(s)|\dot u_\Lambda(s))_H
 \\&\qquad\qquad\qquad\qquad+ \frac{r-\lambda_{k}}{\lambda_{k+1}-\lambda_k}(\A_{k+1} e^{-\delta A_{k+1}}u_\Lambda(s)|\dot u_\Lambda(s))_H
  \Big)ds
	\end{align*}
for $k=0,1,...,l-1.$	
Using (\ref{rule-formula}) and integrating by part we obtain by an easy  calculation
\begin{align*}
2&\Re\int_{\lambda_k}^{\lambda_{k+1}}(\A_\Lambda(s)u_\Lambda(s)|\dot u_\Lambda(s))_H ds
\\&=\lim\limits_{\delta\to 0}\Big[\fra_{k+1}(e^{-\frac{\delta}{2}A_{k+1}}u_\Lambda(\lambda_{k+1}))-\fra_{k}(e^{-\frac{\delta}{2}A_{k}}u_\Lambda(\lambda_{k}))\Big]
\\&\qquad-\lim\limits_{\delta\to 0}\int_{\lambda_k}^{\lambda_{k+1}}\frac{1}{\lambda_{k+1}-\lambda_k}\Big[\fra_{k+1}(e^{-\frac{\delta}{2}A_{k+1}}u_\Lambda(s))-\fra_{k}(e^{-\frac{\delta}{2}A_{k}}u_\Lambda(s))\Big]ds
\\&=\fra_{k+1}(u_\Lambda(\lambda_{k+1}))-\fra_{k}(u_\Lambda(\lambda_{k}))
-\int_{\lambda_k}^{\lambda_{k+1}}\frac{1}{\lambda_{k+1}-\lambda_k}\Big[\fra_{k+1}(u_\Lambda(s))-\fra_{k}(u_\Lambda(s))\Big]ds
\\&=\fra_{k+1}(u_\Lambda(\lambda_{k+1}))-\fra_{k}(u_\Lambda(\lambda_{k}))
-\int_{\lambda_k}^{\lambda_{k+1}}\dot\fra_{\Lambda}(s,u_\Lambda(s))ds
\end{align*}
for $k=0,2,...,l-1,$	here we have use that the restriction of  $(e^{-tA_k})_{t\geq 0}$ on $V$ is a $C_0$-semigroup.   By a similar argument as above we obtain for the integral over $(\lambda_l,t)$

  \begin{align*}&2\Re \int_{\lambda_l}^t(\A_\Lambda(s)u_\Lambda(s)|\dot u_\Lambda(s))_H ds
	\\&=\frac{\lambda_{l+1}-t}{\lambda_{l+1}-\lambda_{l}}\fra_{l}(u_\Lambda(t))+
\frac{t-\lambda_{l}}{\lambda_{l+1}-\lambda_{l}}\fra_{l+1}(u_\Lambda(t))
-\fra_{l}(u_\Lambda(\lambda_l))
\\&-\int_{\lambda_l}^{t}\frac{1}{\lambda_{l+1}-\lambda_l}\Big[\fra_{l+1}(u_\Lambda(s))-\fra_{l}(u_\Lambda(s))\Big]ds
\\&=\fra_\Lambda^L(t,u_\Lambda(t))-\fra_{l}(u_\Lambda(\lambda_l))-
\int_{\lambda_l}^{t}\dot\fra_{\Lambda}(s,u_\Lambda(s))ds
\end{align*}
Consequently

\begin{align*}
  2&\Re\int_0^t(\A_\Lambda(s)u_\Lambda(s)|\dot u_\Lambda(s))_Hds
 \\&=2\Re\sum_{k=0}^{l-1}\int_{\lambda_k}^{\lambda_{k+1}}(\A_\Lambda(s)u_\Lambda(s)|\dot u_\Lambda(s))_H ds+
    2\Re\int_{\lambda_l}^t(\A_\Lambda(s)u_\Lambda(s)|\dot u_\Lambda(s))_H ds
\\&=-\fra_0(u_0)+\fra_\Lambda^L(t,u_\Lambda(t))-\int_0^{t}\dot\fra_\Lambda^L(r,u_\Lambda(r))dr
\end{align*}
This completes the  proof.
   \end{proof}
 The next proposition shows that
 $u_\Lambda$ from Theorem \ref{ADLO14} approximates the solution of (\ref{nCP in V'}) with respect to the norm of $MR(V,V').$
\begin{proposition}\label{convergence forte de solution approchee} Let $f\in L^2(0,T;H)$ and $u_0\in V$ and let $u_\Lambda\in MR(V,H)$ be the solution of (\ref{Probleme approche:  H}). Then $u_\Lambda$ converges strongly in $MR(V,V')$ as $|\Lambda|
  \longrightarrow 0$ to  the solution
of (\ref{nCP in V'}).
\end{proposition}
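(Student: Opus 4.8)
The plan is to avoid weak compactness altogether and instead work directly with the difference $w_\Lambda := u_\Lambda - u$, where $u \in MR(V,V')$ is the unique solution of $(\ref{nCP in V'})$ supplied by Lions' Theorem \ref{wellposedness in V'}, deriving a \emph{quantitative} energy estimate for $w_\Lambda$. Since $u_\Lambda \in MR(V,H) \subset MR(V,V')$ and $u \in MR(V,V')$, we have $w_\Lambda \in MR(V,V')$, and subtracting the two equations gives
\begin{equation*}
\dot w_\Lambda(t) + \A_\Lambda^L(t) w_\Lambda(t) = g_\Lambda(t), \qquad w_\Lambda(0) = 0,
\end{equation*}
where $g_\Lambda := \big(\A - \A_\Lambda^L\big)u \in L^2(0,T;V')$ (add and subtract $\A_\Lambda^L u$ to produce the $\A_\Lambda^L w_\Lambda$ term). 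The crucial input is that $g_\Lambda \to 0$ in $L^2(0,T;V')$ as $|\Lambda| \to 0$; this is exactly Lemma \ref{Proposition: Approximation Lipschitz}(2), applied with $D = V$, $X = V'$ to the fixed function $u \in L^2(0,T;V)$ (equivalently, take the constant sequence $u_\Lambda \equiv u$ there).

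I would then run the standard energy estimate on $w_\Lambda$. Pairing the difference equation with $w_\Lambda(t)$ in the $\langle V', V\rangle$ duality, taking real parts, and invoking the product rule $(\ref{poduct rule classique})$ (legitimate precisely because $w_\Lambda \in MR(V,V')$) yields
\begin{equation*}
\tfrac12 \frac{d}{dt}\|w_\Lambda(t)\|_H^2 + \Re\,\fra_\Lambda^L\big(t;w_\Lambda(t),w_\Lambda(t)\big) = \Re\,\langle g_\Lambda(t), w_\Lambda(t)\rangle .
\end{equation*}
The ellipticity $(\ref{eq:Ellipticity-nonaut})$ with $\omega=0$ — valid for $\fra_\Lambda^L$ because it is a convex combination of the $\fra_k$, each of which inherits $(\ref{eq:Ellipticity-nonaut})$ — bounds the form term below by $\alpha\|w_\Lambda(t)\|_V^2$, while Young's inequality absorbs the right-hand side. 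Integrating over $[0,T]$ and using $w_\Lambda(0)=0$ gives
\begin{equation*}
\tfrac12\|w_\Lambda(T)\|_H^2 + \tfrac{\alpha}{2}\int_0^T \|w_\Lambda(t)\|_V^2\,dt \le \frac{1}{2\alpha}\,\|g_\Lambda\|_{L^2(0,T;V')}^2 \longrightarrow 0,
\end{equation*}
so $u_\Lambda \to u$ strongly in $L^2(0,T;V)$.

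The convergence of the time derivatives then comes for free: reading $\dot w_\Lambda = g_\Lambda - \A_\Lambda^L w_\Lambda$ off the difference equation and using the uniform bound $\|\A_\Lambda^L(t)\|_{\L(V,V')} \le M$ (inherited from $(\ref{eq:continuity-nonaut})$ via the convex-combination structure of $\fra_\Lambda^L$) gives
\begin{equation*}
\|\dot w_\Lambda\|_{L^2(0,T;V')} \le \|g_\Lambda\|_{L^2(0,T;V')} + M\,\|w_\Lambda\|_{L^2(0,T;V)} \longrightarrow 0 .
\end{equation*}
Combining the two estimates proves $u_\Lambda \to u$ strongly in $MR(V,V')$.

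The only genuinely delicate point is the convergence $g_\Lambda \to 0$ in $L^2(0,T;V')$, i.e. that the average–then–interpolate construction of $\A_\Lambda^L$ does not destroy $L^2$-convergence when tested against a fixed $L^2(0,T;V)$ function; this is exactly what the approximation Lemma \ref{Proposition: Approximation Lipschitz} was designed to deliver, and once it is in hand the remainder is a routine Young/energy argument that even produces a convergence rate in terms of $\|g_\Lambda\|_{L^2(0,T;V')}$. (An alternative route would extract a weak limit of $u_\Lambda$ in $MR(V,V')$, identify it as $u$ through the symmetry of $\fra$ and the strong-times-weak convergence of $\langle \A_\Lambda^L \varphi, u_\Lambda\rangle$, and then upgrade to strong convergence by comparing norms; but the direct difference estimate above is shorter and self-contained.)
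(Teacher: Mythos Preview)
Your proof is correct and follows essentially the same route as the paper: you form the difference $w_\Lambda = u_\Lambda - u$, derive the equation $\dot w_\Lambda + \A_\Lambda^L w_\Lambda = g_\Lambda$ with $g_\Lambda = (\A - \A_\Lambda^L)u$, run the energy estimate via the product rule (\ref{poduct rule classique}) and coercivity, and invoke Lemma~\ref{Proposition: Approximation Lipschitz} for $g_\Lambda \to 0$ in $L^2(0,T;V')$. The only cosmetic difference is in the derivative step: the paper writes $\dot u_\Lambda = f - \A_\Lambda^L u_\Lambda$ and applies Lemma~\ref{Proposition: Approximation Lipschitz}(2) directly to $\A_\Lambda^L u_\Lambda$, whereas you bound $\dot w_\Lambda = g_\Lambda - \A_\Lambda^L w_\Lambda$ via the uniform operator bound $M$; both are equivalent and immediate.
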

\begin{proof} Let $f\in L^2(0,T;H)$ and $u_0\in V.$ Let $u, u_\Lambda\in MR(V,V')$ be the solution of (\ref{nCP in V'}) and (\ref{Probleme approche:  H}) respectively. 
Set $w_\Lambda:=u_\Lambda-u$ and $g_{\Lambda}:=(\A-\A_{\Lambda}^L)u.$  Then  $w_\Lambda\in MR(V,V')$ and satisfies
\begin{equation*}
\dot{w}_\Lambda (t)+\A_\Lambda^L(t)w_\Lambda(t)=g_{\Lambda}(t), \quad w_\Lambda(0)=0.
\end{equation*}
From the product rule (\ref{poduct rule classique})  it follows
\begin{align*}\frac{d}{dt}\|w_\Lambda(t)\|^2_H&=2 \Re \langle g_{_\Lambda}(t)-\A_\Lambda^L(t)w_{_\Lambda}(t),w_{_\Lambda}(t)\rangle
\\&=-2\Re a_\Lambda^L(t,w_{_\Lambda}(t),w_{_\Lambda}(t))+2 \Re \langle g_{_\Lambda}(t),w_{_\Lambda}(t)\rangle
\end{align*}
for almost every $t\in [0,T].$
Integrating this equality  on $(0,t),$  we obtain
\[\alpha\int_0^t\Vert w_{_\Lambda}(s)\Vert_V^2ds\leq \int_0^t
\Vert g_{_\Lambda}(s)\Vert_V'\Vert w_{_\Lambda}(s)\Vert_V ds.\]
 This estimate  and the Young's inequality  \[ab\leq \frac{1}{2}(\frac {a^2}{\varepsilon}+\varepsilon b^2) \ \ (\varepsilon>0,\ a,b\in \mathbb{{R}}).\]
yield the estimate
\begin{equation*}
 \alpha\Vert w_\Lambda\Vert_{L^2(0,T:V)}^2\leq 1/\alpha\Vert g_{\Lambda}\Vert_{L^2(0,T:V')}^2.
\end{equation*}
The term of the right hand side of this inequality converges by Proposition \ref{Proposition: Approximation Lipschitz} to $0$ as $|\Lambda|
  \longrightarrow 0.$ It follows that $u_\Lambda\longrightarrow u$ strongly in $L^2(0,T;V).$ Again from the second assertion of Proposition \ref{Proposition: Approximation Lipschitz} follows that $\A_{_\Lambda}^L u_\Lambda\longrightarrow \A u$  in $L^2(0,T;V').$ Letting $|\Lambda|$ go to
  $0$ in  \[\dot w_{_\Lambda}=\dot u_{_\Lambda}-\dot u=f-\A_{_\Lambda}^L u_\Lambda -\dot u\]
and recalling the continuous embedding of $MR(V,V')$ into $C([0,T];H)$ imply the claim.
\end{proof}
Next we assume additionally, as in \cite{D1} or \cite{D2}, that there exists a bounded and non-decreasing function $g:[0,T]\longrightarrow \L(H)$ such that
\begin{equation}\label{Variation bornee}
\vert \fra(t;u,v)-\fra(s;u,v)\vert\leq (g(t)-g(s))\Vert u\Vert_V\Vert v\Vert_V
\end{equation}
for  $u,v\in V, s,t\in [0,T], s\geq t.$ Our aim is the show that under this assumption the solution  $u_\Lambda$ of (\ref{Probleme approche:  H}) converges weakly in $MR(V,H)$ as $\vert\Lambda\vert\longrightarrow 0$ and that  the limit satisfies  (\ref{nCP in V'}).
Without loss of generality, we will assume that $g(0)=0.$ Thus $g$ is positive.  Let \[g_\Lambda^L:[0,T]\longrightarrow [0,\infty[\]  denote  the analogous function to (\ref{Operator: approximation formula1}) and (\ref{Operator: approximation formula2}) for $g.$ Assume that the subdivision $\Lambda$ is uniform, i.e., $\lambda_{k+1}-\lambda_{k}=T/n=\vert\Lambda\vert$ for all $k=0,1,...,n.$
\begin{lemma}\label{Lemma: variation bornee approche} 
\begin{equation}\label{Variation borne: form approchee} 
\vert \fra_\Lambda^L(t;u,v)-\fra_\Lambda^L(s;u,v)\vert\leq [g_\Lambda^L(t)-g_\Lambda^L(s)]\Vert u\Vert_V\Vert v\Vert_V
\end{equation}
for all  $u,v\in V$ and $t,s\in [0,T]$ with $s\leq t.$ 
\end{lemma}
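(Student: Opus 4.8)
The plan is to reduce the claimed inequality (\ref{Variation borne: form approchee}) to a pointwise estimate on the derivatives of the two continuous, piecewise affine functions $r\mapsto\fra_\Lambda^L(r;u,v)$ and $r\mapsto g_\Lambda^L(r)$, and then to integrate. Fix $u,v\in V$. On each open subinterval $(\lambda_k,\lambda_{k+1})$ one may write $\fra_\Lambda^L(r;u,v)=\fra_k(u,v)+\frac{r-\lambda_k}{|\Lambda|}\big(\fra_{k+1}(u,v)-\fra_k(u,v)\big)$, so that
\[
\frac{d}{dr}\fra_\Lambda^L(r;u,v)=\frac{1}{|\Lambda|}\big(\fra_{k+1}(u,v)-\fra_k(u,v)\big),\qquad \frac{d}{dr}g_\Lambda^L(r)=\frac{1}{|\Lambda|}\big(g_{k+1}-g_k\big)
\]
for a.e.\ $r$. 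Since both functions are continuous and piecewise affine, they are absolutely continuous on $[0,T]$, and once the two facts $|\fra_{k+1}(u,v)-\fra_k(u,v)|\le(g_{k+1}-g_k)\Vert u\Vert_V\Vert v\Vert_V$ and $g_{k+1}\ge g_k$ are in hand, the fundamental theorem of calculus together with the triangle inequality for the integral yields, for $s\le t$,
\[
|\fra_\Lambda^L(t;u,v)-\fra_\Lambda^L(s;u,v)|=\Big|\int_s^t \tfrac{d}{dr}\fra_\Lambda^L(r;u,v)\,dr\Big|\le \int_s^t \tfrac{d}{dr}g_\Lambda^L(r)\,dr\,\Vert u\Vert_V\Vert v\Vert_V,
\]
and the last integral equals $g_\Lambda^L(t)-g_\Lambda^L(s)$, which is exactly the claim.

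The crucial step, and the one place where the uniformity of $\Lambda$ is genuinely used, is the estimate between consecutive averaged forms. I would prove it by translating one averaging window onto the next. Because the mesh is uniform we have $\lambda_{k+1}=\lambda_k+|\Lambda|$ and $\lambda_{k+2}=\lambda_{k+1}+|\Lambda|$, so the substitution $r=\sigma+|\Lambda|$ gives
\[
\fra_{k+1}(u,v)-\fra_k(u,v)=\frac{1}{|\Lambda|}\int_{\lambda_k}^{\lambda_{k+1}}\big(\fra(\sigma+|\Lambda|;u,v)-\fra(\sigma;u,v)\big)\,d\sigma.
\]
Applying the bounded-variation hypothesis (\ref{Variation bornee}) to the integrand with $\sigma+|\Lambda|\ge\sigma$ bounds its modulus by $(g(\sigma+|\Lambda|)-g(\sigma))\Vert u\Vert_V\Vert v\Vert_V$; integrating and undoing the same change of variables for $g$ turns the right-hand side into $(g_{k+1}-g_k)\Vert u\Vert_V\Vert v\Vert_V$. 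The identical computation applied to the scalar, non-decreasing $g$ shows simultaneously that $g_{k+1}-g_k=\frac{1}{|\Lambda|}\int_{\lambda_k}^{\lambda_{k+1}}(g(\sigma+|\Lambda|)-g(\sigma))\,d\sigma\ge 0$; hence the node values $g_k$ increase with $k$, and $g_\Lambda^L$, being the continuous piecewise affine interpolant of a non-decreasing sequence, is itself non-decreasing, so $\frac{d}{dr}g_\Lambda^L\ge 0$ a.e.\ and the estimates above are consistent.

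I expect the differentiate-and-integrate mechanism and the monotonicity of $g_\Lambda^L$ to be routine; the only real obstacle is the consecutive-forms estimate, whose proof rests on the shift $r\mapsto r+|\Lambda|$ being an exact bijection between adjacent windows. This is precisely what fails for a non-uniform subdivision, which explains why uniformity of $\Lambda$ was imposed. A minor bookkeeping point is the last subinterval $[\lambda_n,\lambda_{n+1}]$, where the interpolation formula formally refers to $\fra_{n+1}$; I would dispose of it by the convention already implicit in the definition of $\A_\Lambda^L$ (taking $\fra_{n+1}:=\fra_n$ and $g_{n+1}:=g_n$), under which the consecutive estimate holds trivially on that interval and the argument above goes through unchanged.
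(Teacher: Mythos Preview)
Your proposal is correct and follows essentially the same approach as the paper. The heart of both arguments is the translation estimate $|\fra_{k+1}(u,v)-\fra_k(u,v)|\le (g_{k+1}-g_k)\Vert u\Vert_V\Vert v\Vert_V$, proved via the shift $\sigma\mapsto\sigma+|\Lambda|$ between adjacent averaging windows; the paper then restricts to $s,t$ in a single subinterval and computes directly, whereas you phrase the same estimate as a pointwise bound on $\tfrac{d}{dr}\fra_\Lambda^L$ and integrate over $[s,t]$, which absorbs the multi-interval case automatically (and your explicit handling of the endpoint convention $\fra_{n+1}:=\fra_n$ is a point the paper leaves tacit).
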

\begin{proof}
It suffices to show (\ref{Variation borne: form approchee}) for $t,s\in[\lambda_{k},\lambda_{k+1}]$ for some $k\in \{0,1,...,n\}.$ The general case where $t,s$ belong to  two different subintervals follows immediately. Let $u,v\in V,$ then
\begin{align*}
\fra_\Lambda^L(t;u,v)-&\fra_\Lambda^L(s;u,v)=\frac{t-s}{\lambda_{k+1}-\lambda_{k}}\fra_{k+1}(u,v)-
\frac{t-s}{\lambda_{k+1}-\lambda_{k}}\fra_{k}(u,v)
\\&=\frac{t-s}{\lambda_{k+1}-\lambda_{k}}\frac{n}{T}\int_0^{T/n}[\fra(r+\lambda_{k+1};u,v)-\fra(r+\lambda_{k};u,v)]dr.
\end{align*}
Thus (\ref{Variation bornee}) implies
\begin{align*}
\vert \fra_\Lambda^L(t;u,v)-&\fra_\Lambda^L(s;u,v)\vert\\&\leq \frac{t-s}{\lambda_{k+1}-\lambda_{k}}\frac{n}{T}\int_0^{T/n}[g(r+\lambda_{k+1})-g(r+\lambda_{k})]dr\Vert u\Vert_V\Vert v\Vert_V
\\&=\frac{t-s}{\lambda_{k+1}-\lambda_{k}}\frac{n}{T}\big[\int_{\lambda_{k+1}}^{\lambda_{k+2}}g(r)dr
-\int_{\lambda_{k}}^{\lambda_{k+1}}g(r)dr\big]\Vert u\Vert_V\Vert v\Vert_V
\\&=\frac{t-s}{\lambda_{k+1}-\lambda_{k}}\big[\frac{1}{\lambda_{k+2}-\lambda_{k+1}}\int_{\lambda_{k+1}}^{\lambda_{k+2}}g(r)dr
\\&\qquad\qquad-\frac{1}{\lambda_{k+1}-\lambda_{k}}\int_{\lambda_{k}}^{\lambda_{k+1}}g(r)dr\big]\Vert u\Vert_V\Vert v\Vert_V
\\&=\big[\frac{t-s}{\lambda_{k+1}-\lambda_{k}}g_{k+1}-
\frac{t-s}{\lambda_{k+1}-\lambda_{k}}g_{k}\big]\Vert u\Vert_V\Vert v\Vert_V
\\=[g_\Lambda^L(t)-g_\Lambda^L(s)]\Vert u\Vert_V\Vert v\Vert_V
\end{align*}
\end{proof}
The main result of this section is the following
\begin{theorem}
    Assume that the non-autonomous closed  form $\fra$ is symmetric and satisfies (\ref{Variation bornee}). Let $f\in L^2(0,T;H)$ and $u_0\in V$ and let $u_\Lambda\in MR(V,H)$ be the
    solution of (\ref{Probleme approche:  H}). Then $(u_\Lambda)$ converges weakly in $MR(V,H)$ as $|\Lambda|
    \longrightarrow 0$ and $u=\lim\limits_{|\Lambda|\to 0}u_\Lambda$ satisfies  (\ref{nCP in V'}).
\end{theorem}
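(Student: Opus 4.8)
The plan is to prove uniform-in-$\Lambda$ a priori bounds for $u_\Lambda$ in $MR(V,H)$, extract a weakly convergent subsequence by reflexivity, and then identify the limit through Proposition \ref{convergence forte de solution approchee}. Everything rests on the product rule of Theorem \ref{ADLO14} combined with the bounded-variation hypothesis (\ref{Variation bornee}).

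First I would set $\phi_\Lambda(t):=\fra_\Lambda^L(t;u_\Lambda(t))$, which lies in $W^{1,2}(0,T)$ by Theorem \ref{ADLO14}, and derive an energy identity. Substituting $\A_\Lambda^L(t)u_\Lambda(t)=f(t)-\dot u_\Lambda(t)$ into the product rule (\ref{product rule22}) gives, for a.e.\ $t$,
\[
\dot\phi_\Lambda(t)+2\|\dot u_\Lambda(t)\|_H^2=2\Re(f(t)\mid\dot u_\Lambda(t))_H+\dot\fra_\Lambda^L(t;u_\Lambda(t)),
\]
where the partial time derivative equals $\dot\fra_\Lambda^L(t;w)=\frac{1}{|\Lambda|}[\fra_{k+1}(w)-\fra_k(w)]$ on $[\lambda_k,\lambda_{k+1}]$.

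The main obstacle is to control $\dot\fra_\Lambda^L(t;u_\Lambda(t))$ uniformly in $\Lambda$, and this is exactly where (\ref{Variation bornee}) enters. From the computation inside the proof of Lemma \ref{Lemma: variation bornee approche} one has $|\fra_{k+1}(w)-\fra_k(w)|\le(g_{k+1}-g_k)\|w\|_V^2$, so setting $\rho_\Lambda(t):=(g_{k+1}-g_k)/|\Lambda|$ (the derivative of $g_\Lambda^L$) and using coercivity with $\omega=0$ in the form $\alpha\|u_\Lambda(t)\|_V^2\le\phi_\Lambda(t)$, I obtain $|\dot\fra_\Lambda^L(t;u_\Lambda(t))|\le\frac{1}{\alpha}\rho_\Lambda(t)\phi_\Lambda(t)$. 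The decisive point is that, since $g$ is non-decreasing, the weights $g_k$ are non-decreasing and the total mass telescopes: $\int_0^T\rho_\Lambda(t)\,dt\le g(T)$, a bound independent of $\Lambda$. Estimating $2\Re(f\mid\dot u_\Lambda)_H\le 2\|\dot u_\Lambda\|_H^2+\tfrac12\|f\|_H^2$ and absorbing, the energy identity yields the differential inequality
\[
\dot\phi_\Lambda(t)\le\tfrac12\|f(t)\|_H^2+\tfrac{1}{\alpha}\rho_\Lambda(t)\,\phi_\Lambda(t),
\]
and Gronwall's lemma together with $\phi_\Lambda(0)=\fra_0(u_0)\le M\|u_0\|_V^2$ and $\int_0^T\rho_\Lambda\le g(T)$ produces a bound on $\sup_{t}\phi_\Lambda(t)$, hence on $\|u_\Lambda\|_{L^\infty(0,T;V)}$, uniform in $\Lambda$. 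Integrating the same identity over $(0,T)$, discarding $\phi_\Lambda(T)\ge0$, using the telescoping bound on $\int_0^T\rho_\Lambda\phi_\Lambda$, and absorbing $\int_0^T\|\dot u_\Lambda\|_H^2$ by Young's inequality then gives a uniform bound on $\|\dot u_\Lambda\|_{L^2(0,T;H)}$. Thus $(u_\Lambda)$ is bounded in $MR(V,H)$.

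Finally, since $MR(V,H)=L^2(0,T;V)\cap H^1(0,T;H)$ is a Hilbert, hence reflexive, space, every subsequence admits a further weakly convergent subsequence. Because $MR(V,H)\hookrightarrow MR(V,V')$ continuously (as $H\hookrightarrow V'$) and, by Proposition \ref{convergence forte de solution approchee}, $u_\Lambda\to u$ strongly, hence weakly, in $MR(V,V')$ to the solution $u$ of (\ref{nCP in V'}), every weak-$MR(V,H)$ limit of a subsequence must coincide with $u$. Uniqueness of the weak limit then forces the whole family $(u_\Lambda)$ to converge weakly to $u$ in $MR(V,H)$; in particular $u\in MR(V,H)$ and satisfies (\ref{nCP in V'}), which is the asserted $L^2$-maximal regularity in $H$.
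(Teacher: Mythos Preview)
Your proof is correct and follows essentially the same route as the paper: derive an energy inequality from the product rule (\ref{product rule22}), control $\dot\fra_\Lambda^L(t;u_\Lambda(t))$ via Lemma \ref{Lemma: variation bornee approche} and the telescoping bound $\int_0^T\dot g_\Lambda^L\le g(T)$, apply Gronwall to obtain a uniform $L^\infty(0,T;V)$ bound, feed this back to bound $\dot u_\Lambda$ in $L^2(0,T;H)$, and identify the weak limit through Proposition \ref{convergence forte de solution approchee}. The only cosmetic differences are that you run Gronwall on $\phi_\Lambda(t)=\fra_\Lambda^L(t;u_\Lambda(t))$ whereas the paper runs it on $\|u_\Lambda(t)\|_V^2$ (equivalent via $\alpha\|\cdot\|_V^2\le\phi_\Lambda\le M\|\cdot\|_V^2$), and you spell out the reflexivity/subsequence argument at the end more explicitly than the paper does.
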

\begin{proof}
a) First since $u_\Lambda$ satisfies (\ref{Probleme approche:  H}) then
\[\Vert\dot u_{\Lambda}(t)\Vert_H
+(A_{_\Lambda}^L(t) u_{\Lambda}(t)\mid\dot u_{\Lambda}(t))_H=(f(t)\mid\dot u_{\Lambda}(t))_H\ \ t.a.e\]
The product rule (\ref{product rule22}), Cauchy-Schwartz inequality and
  Young's inequality imply that for almost every
$t\in[0,T]$
\[\|\dot{u}_{\Lambda}(t)\|^2_H+\frac{d}{dt}(\fra_\Lambda^L(t;u_\Lambda(t)))
\leq\|f(t)\|^2_H+\dot\fra_\Lambda^L(t;u_\Lambda(t)).\]
Integrating now this inequality on $[0,t],$ it follows that
\begin{equation}\label{equation of part a)}
  \int_0^t\Vert \dot u_\Lambda(r)\Vert^2_{H}{\rm d}r+\alpha\Vert u_\Lambda(t)\Vert_V^2\leq M\Vert  u_0\Vert_V^2
                 +\int_0^t\Vert  f(r)\Vert^2_{H}{\rm d}r+\int_0^t\dot\fra_\Lambda^L(r;u_\Lambda(r)){\rm d}r
\end{equation}
where $\alpha$ and $M$ are the constants in (\ref{eq:continuity-nonaut})-(\ref{eq:Ellipticity-nonaut}).
\par b) Note that by construction the derivative $\dot \fra_\Lambda^L$ of $\fra_\Lambda^L$ equals  \[\dot \fra_\Lambda^L(r;u)=\frac{\fra_{k+1}(u)-\fra_{k}(u)}{\lambda_{k+1}-\lambda_{k}}  \quad\hbox{ for a.e.} r\in[\lambda_{k},\lambda_{k+1}],\  u\in V.\] Now, let $t\in[0,T]$ be arbitrary and let $l\in \{0,1,...,n\}$ be such that $t\in[\lambda_l,\lambda_{l+1}].$ Then
\begin{align*}
    \int_0^t \dot \fra_\Lambda^L(r;u_\Lambda(r)){\rm d}r&=
    \sum_{k=1}^{l-1}\int_{\lambda_k}^{\lambda_{k+1}}\dot \fra_\Lambda^L(r;u_\Lambda(r)){\rm d}r+
    \int_{\lambda_l}^t\dot \fra_\Lambda^L(r;u_\Lambda(r)){\rm d}r
\\&=\sum_{k=1}^{l-1}\int_{\lambda_k}^{\lambda_{k+1}}\frac{\fra_{k+1}(u_\Lambda(r))-\fra_{k+1}(u_\Lambda(r))}{\lambda_{k+1}-\lambda_{k}}{\rm d}r
\\&\qquad\qquad+\int_{\lambda_l}^t\frac{\fra_{l+1}(u_\Lambda(r))-\fra_{l}(u_\Lambda(r))}{\lambda_{l+1}-\lambda_{l}}{\rm d}r
\\&=\sum_{k=1}^{l-1}\int_{\lambda_k}^{\lambda_{k+1}}\frac{\fra_{\Lambda}^L(\lambda_{k+1};u_\Lambda(r)) -\fra_{\Lambda}^L(\lambda_{k};u_\Lambda(r))}{\lambda_{k+1}-\lambda_{k}}{\rm d}r
\\&\qquad\qquad+\int_{\lambda_l}^t\frac{\fra_{\Lambda}^L(\lambda_{l+1};u_\Lambda(r)) -\fra_{\Lambda}^L(\lambda_{l};u_\Lambda(r))}{\lambda_{l+1}-\lambda_{l}}{\rm d}r.
\end{align*}
By Lemma \ref{Lemma: variation bornee approche} it follows that
\begin{align*}
\int_0^t \dot \fra_\Lambda^L(r;u_\Lambda(r)){\rm d}r&\leq \sum_{k=1}^{l-1}\int_{\lambda_k}^{\lambda_{k+1}}\frac{g_{\Lambda}^L(\lambda_{k+1}) -g_{\Lambda}^L(\lambda_{k})}{\lambda_{k+1}-\lambda_{k}}\Vert u_\Lambda(r)\Vert_V^2{\rm d}r
\\&\qquad\qquad+\int_{\lambda_l}^t\frac{g_{\Lambda}^L(\lambda_{l+1}) -g_{\Lambda}^L(\lambda_{l})}{\lambda_{l+1}-\lambda_{l}}\Vert
 u_\Lambda(r)\Vert_V^2{\rm d}r
\\&=\sum_{k=1}^{l-1}\int_{\lambda_k}^{\lambda_{k+1}}\dot g_{\Lambda}^L(r)\Vert u_\Lambda(r)\Vert_V^2{\rm d}r
+\int_{\lambda_l}^t \dot g_{\Lambda}^L(r) \Vert
 u_\Lambda(r)\Vert_V^2{\rm d}r
\\&=\int_{0}^t \dot g_{\Lambda}^L(r) \Vert
 u_\Lambda(r)\Vert_V^2{\rm d}r
\end{align*}
\par\noindent c) Using an analogous calculus as in part  $b)$ and the fact that \[\dot g_{\Lambda}^L(r)=\frac{g_{k+1}-g_{k}}{\lambda_{k+1}-\lambda_{k}}  \quad\hbox{ for a.e.} r\in[\lambda_{k},\lambda_{k+1}]\]
we can easily see that
\begin{equation}
\int_{0}^t \dot g_{\Lambda}^L(r){\rm d}r\leq g(T)
\end{equation}
since the function $g$ is positive and non-decreasing.
\par\noindent d) As a consequence of (\ref{equation of part a)}), the parts b)-c) and Gronwall's lemma it follows that
\begin{equation*}
\sup_{t\in[0,T]}\Vert u_\Lambda(t)\Vert_V^2\leq 1/\alpha\big[M\Vert  u_0\Vert_V^2
                 +\int_0^T\Vert f(r)\Vert^2_{H}{\rm d}r\big]\exp(g(T)/\alpha).
\end{equation*}
Inserting this estimate into (\ref{equation of part a)}), we find that there exists $c=c(\alpha, g(T), M)\geq 0$ such that
\begin{equation}\label{estimation de la derivee approche}
\int_0^T \Vert\dot u_\Lambda(s)\Vert_{H}^2{\rm d}s\leq c\big[\Vert  u_0\Vert_V^2+\Vert f\Vert_{L^2(0,T;H)}^2\big]
\end{equation}
Since $u_\Lambda(t)=u_\Lambda(0)+\int_0^t \dot u_\Lambda(s){\rm d}s,$ there exists a constant $c=c(c_H,T)$ with
\begin{equation}
\int_0^T \Vert u_\Lambda(s)\Vert_{H}^2{\rm d}s\leq c\big[\Vert u_0\Vert^2_V+\Vert \dot u_\Lambda\Vert_{L^2(0,T;H)}^2\big],
\end{equation}
where $c_H$ is the embedding constant of the embedding of $V$ into $H.$
This estimate and (\ref{estimation de la derivee approche}) yield the estimate
\begin{equation}
\Vert u_\Lambda\Vert_{H^1(0,T;H)}^2\leq c\big[\Vert u_0\Vert^2_V+\Vert f\Vert_{L^2(0,T;H)}^2\big]
\end{equation}
for some constant $c=c(\alpha, M, c_H, g(T), T)>0$ independent of the subdivision $\Lambda.$
\par\noindent e)  It follows from the parts $a)-d)$ that $u_\Lambda$ is bounded in $H^1(0,T;H).$ On other hand and as mentioned, Problem  (\ref{nCP in V'}) has a unique solution $u$ in $MR(V,V')$ and  we have seen in Proposition \ref{convergence forte de solution approchee} that $MR(V,H)\ni u_\Lambda\rightarrow u$ in $MR(V,V').$   As a consequence $u\in MR(V,H).$ This completes the proof.
\end{proof}


\end{document}